\newtheorem{thm}{Theorem}
\newtheorem{rmk}{Remark}
\newtheorem{alg}{Algorithm}
\newcommand\BibTeX{{\rmfamily B\kern-.05em \textsc{i\kern-.025em b}\kern-.08em
T\kern-.1667em\lower.7ex\hbox{E}\kern-.125emX}}
\begin{document}

\title{A novel greedy Gauss-Seidel method for solving large linear least squares problem\protect}

\author{Yanjun Zhang}

\author{Hanyu Li*}

\authormark{ZHANG AND LI}

\address{\orgdiv{College of Mathematics and Statistics}, \orgname{Chongqing University}, \orgaddress{\state{Chongqing}, \country{China}}}

\corres{*Hanyu Li, College of Mathematics and Statistics, Chongqing University, Chongqing 401331, P.R. China.\\ \email{lihy.hy@gmail.com or hyli@cqu.edu.cn.}}

\presentaddress{National Natural Science Foundation of China, Grant/Award Number: 11671060; Natural Science Foundation Project of CQ CSTC, Grant/Award Number: cstc2019jcyj-msxmX0267}

\abstract[Summary]{We present a novel greedy Gauss-Seidel method for solving large linear least squares problem. This method improves the greedy randomized coordinate descent (GRCD) method proposed recently by Bai and Wu 
[Bai ZZ, and Wu WT. On greedy randomized coordinate descent methods for solving large linear least-squares problems. Numer Linear Algebra Appl. 2019;26(4):1--15], which in turn improves the popular randomized Gauss-Seidel method. Convergence analysis of the new method is provided. Numerical experiments show that, for the same accuracy, our method outperforms the GRCD method in term of the computing time.}

\keywords{greedy Gauss-Seidel method, greedy randomized coordinate descent method, randomized Gauss-Seidel method, large linear least squares problem}


\maketitle


\section{Introduction}\label{sec1}
Linear least squares problem is a classical linear algebra problem in scientific computing, arising for instance in many parameter estimation problems. In the literature, several direct methods for solving this problem are studied. Such methods including the use of QR factorization with pivoting and the use of singular value decomposition (SVD) \cite{Osborne1996,Higham2002} require high storage and 
are expensive when the matrix 
 is large-scale. Hence, iterative methods are considered for solving large linear least squares problem, such as the famous Gauss-Seidel method \cite{Saad2003}.

Inspired by a work of Strohmer and Vershynin \cite{Strohmer2009} which shows that the randomized Kaczmarz method converges linearly in expectation to the solution, Leventhal and Lewis \cite{Leventhal2010} obtained a similar result for the randomized Gauss-Seidel (RGS) method, which is also called the randomized coordinate descent method. This method works on the columns of the matrix $A$ to minimize $ \|\mathbf{b}-\mathbf{A} \mathbf{x}\|^2_{2}$ randomly according to an appropriate probability distribution and has attracted much attention recently due to its better performance; see for example \cite{Ma2015,Edalatpour2017,Hefny2017,Tu2017,Chen2017,Tian2017,Xu1,Dukui2019,Razaviyayn2019} and references therein.



Recently, Bai and Wu \cite{Bai2019} proposed a greedy randomized coordinate descent (GRCD) method by introducing an efficient probability criterion for selecting the working columns from the matrix $A$, which avoids a weakness of the one adopted in the RGS method. The GRCD method is faster than the RGS method in terms of the number of iterations and computing time. By the way, the idea of greed applied in \cite{Bai2019}  has wide applications, see for example \cite{Griebel2012,Nguyen2017,Bai2018,Bai2018r,Nutini2018,Zhang2019,Du2019,Liu2019} and references therein.


In the present paper, we develop a novel greedy Gauss-Seidel (GGS) method for solving large linear least squares problem, which adopts a quite different way to determine the working columns of the matrix $A$ compared with the GRCD method and hence needs less computing time in each iteration; see the detailed analysis before Algorithm \ref{alg2} below. In theory, we prove the convergence of the GGS method. In numerical experiments, we compare the performance of the GGS and GRCD methods using the examples from \cite{Bai2019}. Numerical results show that, for the same accuracy, the GGS method requires almost the same number of iterations as that of the GRCD method, however,  the GGS method spends less computing time 
 in all the cases.

The rest of this paper is organized as follows. In Section \ref{sec2}, notation and some preliminaries are provided. We present our novel GGS method and its convergence properties in Section \ref{sec3}. Numerical experiments are given in Section \ref{sec4}. 

\section{Notation and Preliminaries }\label{sec2}

For a vector $z\in R^{n}$, $z^{(j)}$ represents its $j$th entry. For a matrix $G=(g_{ij})\in R^{m\times n}$, $G_{(j)}$, $\|G\|_2$, and $\|G\|_F$ denote its $j$th column, spectral norm, and Frobenius norm, respectively. Moreover, if the matrix $G\in R^{n\times n}$ is positive definite, then we define the energy norm of any vector $x\in R^{n}$ as $\| x\|_G:=\sqrt{x^TGx}$, where $(\cdot)^T$ denotes the transpose of a vector or a matrix. In addition, we denote the identity matrix by $I$, its $j$th column by $e_j$, the smallest positive eigenvalue of $G^{T}G$ by $\lambda_{\min}\left( G^{T} G\right)$ and the number of elements of a set $\mathcal{W}$ by $|\mathcal{W}|$.

In what follows, as done in \cite{Bai2019}, we use $x_{\star}=A^{\dag}b$, with $A^{\dag}=(A^TA)^{-1}A^T$ being the Moore-Penrose pseudoinverse, to denote the unique least squares solution to the linear least squares problem:
\begin{equation}
\label{ls}
\min \limits _{\mathbf{x} \in \mathbb{R}^{n}}\|\mathbf{b}-\mathbf{A} \mathbf{x}\|^2_{2},
\end{equation}
where $A\in R^{m\times n}$ is of full column rank and $b\in R^{m}$.
As we know, the solution $x_\star:=\texttt{arg}\min \limits _{\mathbf{x} \in \mathbb{R}^{n}}\|\mathbf{b}-\mathbf{A} \mathbf{x}\|^2_{2}$ is the solution to the following normal equation \cite{Osborne1961} for (\ref{ls}):
\begin{equation}
\label{1}
A^TAx=A^Tb.
\end{equation}
Based on the normal equation (\ref{1}), Bai and Wu \cite{Bai2019} proposed the GRCD method listed as follows, where $r_k = b-Ax_k$ denotes the residual vector.
\begin{alg}
\label{alg1}
 The GRCD method
\begin{enumerate}[]
\item \mbox{INPUT:} ~$A\in R^{m\times n}$, $b\in R^{m}$, $\ell$ , initial estimate $x_0$
\item \mbox{OUTPUT:} ~$x_\ell$
\item For $k=0, 1, 2, \ldots, \ell-1$ do
\item ~~~~Compute
 $$\delta_{k}=\frac{1}{2}\left(\frac{1}{\left\|A^Tr_k\right\|_{2}^{2}} \max \limits _{1 \leq j  \leq n}\left\{\frac{\left|A^T_{(j)}r_k\right|^{2}}{\left\|A_{\left(j\right)}\right\|_{2}^{2}}\right\}+\frac{1 }{\|A\|_{F}^{2}}\right).$$
\item ~~~~Determine the index set of positive integers
 $$\mathcal{V}_{k}=\left\{j \Bigg|  \left|A^T_{(j)}r_k\right|^{2} \geq \delta_{k} \left\|A^Tr_k\right\|_{2}^{2} \left\|A_{\left(j\right)}\right\|_{2}^{2}\right\}.$$
\item ~~~~Let $s_k=A^Tr_k$ and define $\tilde{s}_k$ as follows
  $$
\tilde{s}_{k}^{(j)}=\left\{\begin{array}{ll}{s_{k}^{(j)},} & {\text { if } j \in \mathcal{V}_{k}}, \\ {0,} & {\text { otherwise. }}\end{array}\right.
$$
\item ~~~~Select $j_k \in \mathcal{V}_{k}$ with probability Pr(column = $j_k$)=$\frac{|\tilde{s}_{k}^{(j_k)}|^2}{\|\tilde{s}_{k}\|^2_2}$.
\item ~~~~Set
$$x_{k+1}=x_{k}+\frac{ s_{k}^{(j_k)} }{ \| A_{\left(j_{k}\right)} \|_{2}^{2}}e_{j_k}.$$
\item End for
\end{enumerate}
\end{alg}

From the definitions of  $\delta_{k}$ and $\mathcal{V}_{k}$ in Algorithm \ref{alg1}, we have  that if $\ell\in \mathcal{V}_{k}$, then
\begin{eqnarray*}
\frac{\left|A^T_{(\ell)}r_k\right|^{2}}{\left\|A_{\left(\ell\right)}\right\|_{2}^{2}}\geq\frac{1}{2}\left( \max \limits _{1 \leq j  \leq n}\left\{\frac{\left|A^T_{(j)}r_k\right|^{2}}{\left\|A_{\left(j\right)}\right\|_{2}^{2}}\right\}+\frac{\left\|A^Tr_k\right\|_{2}^{2} }{\|A\|_{F}^{2}}\right).
\end{eqnarray*}
Note that
\begin{equation*}
\label{3}
\max \limits _{1 \leq j  \leq n}\left\{\frac{\left|A^T_{(j)}r_k\right|^{2}}{\left\|A_{\left(j\right)}\right\|_{2}^{2}}\right\}\geq\sum_{j=1}^{n}\frac{\|A_{(j)}\|^2_2}{\| A\|^2_F}\frac{ \left|A^T_{(j)}r_k\right|^{2}}{\|A_{(j)}\|^2_2} \notag= \frac{\left\|A^Tr_k\right\|^{2}_2}{\|A\|_{F}^{2}}.
\end{equation*}
Thus, we can't conclude that if $\ell\in \mathcal{V}_{k}$, then
\begin{eqnarray*}
\frac{\left|A^T_{(\ell)}r_k\right|^{2}}{\left\|A_{\left(\ell\right)}\right\|_{2}^{2}}\geq \max \limits _{1 \leq j  \leq n}\left\{\frac{\left|A^T_{(j)}r_k\right|^{2}}{\left\|A_{\left(j\right)}\right\|_{2}^{2}}\right\}, \textrm{ i.e., } \ \frac{\left|A^T_{(\ell)}r_k\right|^{2}}{\left\|A_{\left(\ell\right)}\right\|_{2}^{2}}= \max \limits _{1 \leq j  \leq n}\left\{\frac{\left|A^T_{(j)}r_k\right|^{2}}{\left\|A_{\left(j\right)}\right\|_{2}^{2}}\right\}.
\end{eqnarray*}
As a result, there may exist some $\ell\in \mathcal{V}_{k}$ such that
\begin{equation}
\label{3343434}
 \frac{\left|A^T_{(\ell)}r_k\right|^{2}}{\left\|A_{\left(\ell\right)}\right\|_{2}^{2}}<\max \limits _{1 \leq j  \leq n}\left\{\frac{\left|A^T_{(j)}r_k\right|^{2}}{\left\|A_{\left(j\right)}\right\|_{2}^{2}}\right\}.
\end{equation}
Meanwhile, from the update formula, for any $j_k\in \mathcal{V}_{k}$, we have
\begin{equation}
\label{2}
\|Ax_{k+1}-Ax_{k}\|^2_2=\frac{\left|A^T_{(j_k)}r_k\right|^{2}}{\left\|A_{\left(j_k\right)}\right\|_{2}^{2}}.
\end{equation}
Thus, combining (\ref{3343434}) and (\ref{2}), we can find that we can't make sure any column with the index from the index set $\mathcal{V}_{k}$ make the distance between $Ax_{k+1}$ and $Ax_{k}$ be the  largest when finding $x_{k+1}$. Furthermore, to compute $\delta_{k}$, we have to calculate the norm of each column of the matrix $A$.

\section{A Novel Greedy Gauss-Seidel Method}\label{sec3}

Considering that a column with the index from the index set $\mathcal{V}_{k}$ in the GRCD method may make the distance between $Ax_{k+1}$ and $Ax_{k}$ not be the largest and to compute $\delta_{k}$ needs to calculate the norm of each column of the matrix $A$, and
inspired by
 some recent works on selection strategy for working index based on the maximum residual \cite{Nutini2018,Haddock2019,Rebrova2019}, we design a new method which  includes two main steps. In the first step, we use the maximum entries of the residual vector $s_k$ of the normal equation (\ref{1}) to determine an index set $\mathcal{R}_{k}$ whose specific definition is given in Algorithm \ref{alg2}. In the second step, we capture an index from the set $\mathcal{R}_{k}$ with which we can make sure the distance between $Ax_{k+1}$ and $Ax_{k}$ be the largest for any possible $x_{k+1}$. On a high level, the new method seems to change the order of the two main steps of Algorithm \ref{alg1}. However, comparing with the GRCD method, besides making the distance between $Ax_{k+1}$ and $Ax_{k}$ always be the largest when finding $x_{k+1}$, 
 we also do not need to calculate the norm of each column of the matrix $A$ any longer in Algorithm \ref{alg2}. Moreover, we can also find that the number of elements in set $\mathcal{R}_{k}$ may be less than the number of elements in set $\mathcal{V}_{k}$, i.e., $|\mathcal{R}_{k}|<|\mathcal{V}_{k}|$ because $\mathcal{R}_{k}$ is determined by the maximum entries of the vector $s_k$. Consequently,  our method can reduce the computation cost at each iteration and hence behaves better in the computing time, which is confirmed by extensive numerical experiments given in Section \ref{sec4}.

 Based on the above introduction, we propose the following algorithm, i.e., Algorithm \ref{alg2}.

\begin{alg}
\label{alg2}
 The GGS method
\begin{enumerate}[]
\item \mbox{INPUT:} ~$A\in R^{m\times n}$, $b\in R^{m}$, $\ell$ , initial estimate $x_0$
\item \mbox{OUTPUT:} ~$x_\ell$
\item For $k=0, 1, 2, \ldots, \ell-1$ do
\item ~~~~Determine the index set of positive integers
 $$\mathcal{R}_{k}=\left\{\tilde{j}_{k}\Bigg| \tilde{j}_{k}= {\rm arg} \max \limits _{1 \leq j  \leq n}\left|A^T_{(j)}r_k\right|\right\}.$$
\item ~~~~Compute
$$j_{k}={\rm arg} \max \limits _{\tilde{j}_{k}\in \mathcal{R}_{k}}\left\{\frac{\left|A^T_{(\tilde{j}_{k})}r_k\right|^2 }{\left\|A_{(\tilde{j}_{k})}\right\|^2_{2}  }\right\}.$$
\item ~~~~Set
$$x_{k+1}=x_{k}+\frac{ A^T_{(j_{k})}r_k}{ \| A_{\left(j_{k}\right)} \|_{2}^{2}}e_{j_k}.$$
\item End for
\end{enumerate}
\end{alg}

\begin{rmk}
\label{rmk}
Note that if
$$\left|A^T_{(j_k)}r_k\right|=  \max \limits _{1 \leq j  \leq n}\left|A^T_{(j)}r_k\right|,$$
then $j_k\in\mathcal{R}_{k}.$ So the index set $\mathcal{R}_{k}$ in Algorithm \ref{alg2} is nonempty for all iteration index $k$.
\end{rmk}

\begin{rmk}
\label{rmk110}
Like Algorithm \ref{alg1}, we can use the values of $\frac{\left|A^T_{(\tilde{j}_{k})}r_k\right|^2 }{\left\|A_{(\tilde{j}_{k})}\right\|^2_{2}  }$ for $\tilde{j}_{k}\in \mathcal{R}_{k}$ as a  probability selection criterion to devise a randomized version of Algorithm \ref{alg2}. In this case, the convergence factor may be a little worse than that of Algorithm \ref{alg2} because, for the latter,  the index is selected based on the largest value of $\frac{\left|A^T_{(\tilde{j}_{k})}r_k\right|^2 }{\left\|A_{(\tilde{j}_{k})}\right\|^2_{2}  }$ for $\tilde{j}_{k}\in \mathcal{R}_{k}$, which make the distance between $Ax_{k+1}$ and $Ax_{k}$ be the largest for any possible $x_{k+1}$.
\end{rmk}



In the following, we give the convergence theorem of the GGS method.

\begin{thm}
\label{theorem1}
The iteration sequence $\{x_k\}_{k=0}^\infty$ generated by Algorithm \ref{alg2}, starting from an initial guess $x_0\in R^{n}$, converges linearly to the unique least squares solution $x_{\star}=A^{\dag}b$ and
\begin{equation}
\label{4}
  \| x_1-x_\star\|^2_{A^TA} \leq\left(1-\frac{1}{|\mathcal{R}_{0}|} \cdot\frac{1}{\sum\limits_{j_0\in \mathcal{R}_{0} }\|A_{(j_0)}\|^2_2 }\cdot\frac{1}{n}\cdot\lambda_{\min}\left( A^{T} A\right)\right)\| x_0-x_\star\|^2_{A^TA},
\end{equation}
and
 \begin{equation}
\label{5}
\| x_{k+1}-x_\star\|^2_{A^TA} \leq\left(1-\frac{1}{|\mathcal{R}_{k}|} \cdot\frac{1}{\sum\limits_{j_k\in \mathcal{R}_{k}}\|A_{(j_k)}\|^2_2 }\cdot\frac{1}{n-1}\cdot\lambda_{\min}\left( A^{T} A\right) \right)\| x_k-x_\star\|^2_{A^TA},~~k=1, 2, \ldots .
\end{equation}
Moreover, let $\alpha=\max \{|\mathcal{R}_{k}|\}$, $\beta=\max\{\sum\limits_{j_k\in \mathcal{R}_{k}}\|A_{(j_k)}\|^2_2\}, k=0, 1, 2, \ldots.$ Then,
\begin{equation}
\label{6}
\| x_{k}-x_\star\|^2_{A^TA} \leq\left(1-\frac{\lambda_{\min}\left( A^{T} A\right) }{\alpha\cdot \beta\cdot(n-1)} \right)^{k-1}\left(1- \frac{\lambda_{\min}\left( A^{T} A\right)}{|\mathcal{R}_{0}| \cdot\sum\limits_{j_0\in \mathcal{R}_{0}}\|A_{(j_0)}\|^2_2 \cdot n  }\right)\cdot\| x_0-x_\star\|^2_{A^TA},~~k=1, 2, \ldots .
\end{equation}
\end{thm}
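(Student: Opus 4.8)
The plan is to turn the coordinate step of Algorithm~\ref{alg2} into a one-step contraction in the energy norm $\|\cdot\|_{A^TA}$, and then to bound the per-step decrease from below using only the defining properties of $\mathcal{R}_k$ and $j_k$. First I would note that the update is exact coordinate minimization of $\frac12\|b-Ax\|_2^2$ along $e_{j_k}$: the gradient of this quadratic at $x_k$ is $-A^Tr_k=A^TA(x_k-x_\star)$, and since $A^Tr_\star=0$, expanding $\|x_{k+1}-x_\star\|_{A^TA}^2=\bigl\|(x_k-x_\star)+\frac{A_{(j_k)}^Tr_k}{\|A_{(j_k)}\|_2^2}e_{j_k}\bigr\|_{A^TA}^2$ and using $(x_k-x_\star)^TA^TAe_{j_k}=-A_{(j_k)}^Tr_k$ collapses the cross term and gives
$$\|x_{k+1}-x_\star\|_{A^TA}^2=\|x_k-x_\star\|_{A^TA}^2-\frac{\bigl|A_{(j_k)}^Tr_k\bigr|^2}{\|A_{(j_k)}\|_2^2},$$
equivalently $\|x_k-x_\star\|_{A^TA}^2-\|x_{k+1}-x_\star\|_{A^TA}^2=\|Ax_{k+1}-Ax_k\|_2^2$, which is (\ref{2}). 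A byproduct of the same computation, needed below, is that $A_{(j_k)}^Tr_{k+1}=0$: a coordinate step zeroes out the $j_k$-th entry of the normal-equation residual.

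Next I would estimate the gain $\dfrac{|A_{(j_k)}^Tr_k|^2}{\|A_{(j_k)}\|_2^2}$ from below. Since $j_k\in\mathcal{R}_k$ (see Remark~\ref{rmk}), the numerator equals $\max_{1\le j\le n}|A_{(j)}^Tr_k|^2$, and I would bound this maximum below by an average over the entries of $A^Tr_k$ that can be nonzero. For $k=0$ no entry is known to vanish, so $\max_{1\le j\le n}|A_{(j)}^Tr_0|^2\ge\frac1n\|A^Tr_0\|_2^2$; for $k\ge1$ the byproduct above shows the $j_{k-1}$-th entry of $A^Tr_k$ is zero, leaving at most $n-1$ possibly nonzero entries and hence $\max_{1\le j\le n}|A_{(j)}^Tr_k|^2\ge\frac1{n-1}\|A^Tr_k\|_2^2$ --- this is exactly what produces the $n$ in (\ref{4}) and the $n-1$ in (\ref{5}). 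For the denominator I would use the crude bound $\|A_{(j_k)}\|_2^2\le|\mathcal{R}_k|\sum_{j\in\mathcal{R}_k}\|A_{(j)}\|_2^2$. Finally, from $A^Tr_k=A^TA(x_k-x_\star)$ and the Rayleigh-quotient inequality $y^T(A^TA)^2y\ge\lambda_{\min}(A^TA)\,y^TA^TAy$ one gets $\|A^Tr_k\|_2^2\ge\lambda_{\min}(A^TA)\,\|x_k-x_\star\|_{A^TA}^2$.

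Substituting these three estimates into the identity gives (\ref{4}) at the first step and (\ref{5}) at every later step. The contraction factor lies in $[0,1)$: it is strictly below $1$ because $\lambda_{\min}(A^TA)>0$, and nonnegative because $\|A_{(j)}\|_2^2=e_j^TA^TAe_j\ge\lambda_{\min}(A^TA)$ for every $j$; hence $\|x_k-x_\star\|_{A^TA}\to0$ geometrically, i.e. $x_k\to x_\star=A^\dagger b$ linearly. Estimate (\ref{6}) then follows by iterating (\ref{5}) from index $k$ down to $1$, multiplying in (\ref{4}) at the bottom of the recursion, and replacing $|\mathcal{R}_k|$ and $\sum_{j\in\mathcal{R}_k}\|A_{(j)}\|_2^2$ by the uniform bounds $\alpha$ and $\beta$.

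The main point to be careful about is the bookkeeping behind the energy-norm identity --- verifying that $A^Tr_\star=0$ kills the cross term and that a coordinate step annihilates the matching entry of $A^Tr_{k+1}$, since this second fact is precisely what upgrades the factor $1/n$ to $1/(n-1)$ after the first iteration; once it is in place, everything downstream is a short chain of elementary inequalities. It is also worth confirming that the deliberately wasteful denominator bound $\|A_{(j_k)}\|_2^2\le|\mathcal{R}_k|\sum_{j\in\mathcal{R}_k}\|A_{(j)}\|_2^2$ is what reproduces the constants exactly as stated (a tighter estimate is available, since $j_k$ in fact minimizes $\|A_{(j)}\|_2$ over $\mathcal{R}_k$, but it would alter the form of the factor).
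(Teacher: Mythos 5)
Your proof is correct and follows essentially the same route as the paper: the exact one-step energy-norm identity (the paper phrases it as orthogonality of $A(x_{k+1}-x_k)$ and $A(x_{k+1}-x_\star)$ plus the Pythagorean theorem), the fact that $A^T_{(j_{k-1})}r_k=0$ to upgrade $1/n$ to $1/(n-1)$, and the bound $\|A^Tr_k\|^2_2\geq\lambda_{\min}(A^TA)\|x_k-x_\star\|^2_{A^TA}$. The only cosmetic difference is how the factor $\frac{1}{|\mathcal{R}_k|}\cdot\frac{1}{\sum_{j\in\mathcal{R}_k}\|A_{(j)}\|^2_2}$ is extracted (your direct bound $\|A_{(j_k)}\|^2_2\leq|\mathcal{R}_k|\sum_{j\in\mathcal{R}_k}\|A_{(j)}\|^2_2$ versus the paper's convex-combination averaging over $\mathcal{R}_k$), and both land on the stated constants.
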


\begin{proof}
From the update rule in Algorithm \ref{alg2}, we have
 $$A(x_{k+1}-x_{k})=\frac{ A^T_{(j_{k})}r_k }{ \| A_{\left(j_{k}\right)} \|_{2}^{2}}A_{\left(j_{k}\right)},$$
which implies that $A(x_{k+1}-x_{k})$ is parallel to $A_{\left(j_{k}\right)}$. Meanwhile,
\begin{eqnarray*}
A(x_{k+1}-x_{\star})&=&A\left(x_{k}-x_\star+\frac{ A^T_{(j_{k})}r_k }{ \| A_{\left(j_{k}\right)} \|_{2}^{2}}e_{j_k}\right)
\\
&=& A\left(x_{k}-x_\star\right)+\frac{ A^T_{(j_{k})}r_k }{ \| A_{\left(j_{k}\right)} \|_{2}^{2}}A_{\left(j_{k}\right)},
\end{eqnarray*}
which together with the fact $A^TAx_{\star}=A^Tb$ gives
\begin{eqnarray*}
A(x_{k+1}-x_{\star})&=& \left(I-\frac{A_{(j_{k})} A^T_{(j_{k})}  }{ \| A_{\left(j_{k}\right)} \|_{2}^{2}} \right)A\left(x_{k}-x_\star\right).
\end{eqnarray*}
Then
\begin{eqnarray*}
A^T_{(j_{k})} A(x_{k+1}-x_{\star})=A^T_{(j_{k})}  \left(I-\frac{A_{(j_{k})} A^T_{(j_{k})}  }{ \| A_{\left(j_{k}\right)} \|_{2}^{2}} \right)A\left(x_{k}-x_\star\right)=0,
\end{eqnarray*}
and hence $A(x_{k+1}-x_{\star})$ is orthogonal to $A_{\left(j_{k}\right)}$. Thus, the vector $A(x_{k+1}-x_{k})$ is perpendicular to the vector $A(x_{k+1}-x_{\star})$. By the Pythagorean theorem, we get
\begin{equation*}
\|A(x_{k+1}-x_{\star})\|^2_2=\|A(x_{k}-x_{\star})\|^2_2-\|A(x_{k+1}-x_{k})\|^2_2,
\end{equation*}
or equivalently,
\begin{equation}
\label{7}
\left\|x_{k+1}-x_{\star}\right\|^2_{A^{T}A}=\left\|x_{k}-x_{\star}\right\|^2_{A^{T}A}-\left\|x_{k+1}-x_{k}\right\|^2_{A^{T}A}.
\end{equation}
On the other hand, from Algorithm \ref{alg2},  we have
$$\left|A^T_{(j_{k})}r_k\right|=\max\limits_{1\leq j\leq n}  \left|A^T_{(j)}r_k\right|
~{\rm and}~
\frac{ \left|A^T_{(j_{k})}r_k\right|^2 }{ \left\| A_{\left(j_{k}\right)}\right \|_{2}^{2}}=\max\limits_{j\in \mathcal{R}_{k}}\frac{ \left|A^T_{(j)}r_k\right|^2 }{ \left\| A_{\left(j\right)}\right \|_{2}^{2}}.
$$
Then
\begin{align}
\left\|x_{k+1}-x_{k}\right\|^2_{A^{T}A}&=~\|A(x_{k+1}-x_{k})\|^2_2=\frac{ \left|A^T_{(j_{k})}r_k\right|^2 }{ \left\| A_{\left(j_{k}\right)}\right \|_{2}^{2}} \geq~ \sum\limits_{j_k\in \mathcal{R}_{k}} \frac{\frac{ \left|A^T_{(j_{k})}r_k\right|^2 }{ \left\| A_{\left(j_{k}\right)}\right \|_{2}^{2}}}{\sum\limits_{j\in \mathcal{R}_{k}}\frac{ \left|A^T_{(j)}r_k\right|^2 }{ \left\| A_{\left(j\right)}\right \|_{2}^{2}} }\cdot \frac{ \left|A^T_{(j_{k})}r_k\right|^2 }{ \left\| A_{\left(j_{k}\right)}\right \|_{2}^{2}}  \notag\\
&\geq ~\sum\limits_{j_k\in \mathcal{R}_{k}} \frac{1}{|\mathcal{R}_{k}|}\cdot\frac{ \left|A^T_{(j_{k})}r_k\right|^2 }{ \left\| A_{\left(j_{k}\right)}\right \|_{2}^{2}} = ~\sum\limits_{j_k\in \mathcal{R}_{k}} \frac{1}{|\mathcal{R}_{k}|}\cdot\frac{ \max\limits_{1\leq j\leq n}  \left|A^T_{(j)}r_k\right|^2}{ \left\| A_{\left(j_{k}\right)}\right \|_{2}^{2}}.\label{8}
\end{align}
Thus, substituting (\ref{8}) into (\ref{7}), we obtain
\begin{equation}
\label{71212}
\left\|x_{k+1}-x_{\star}\right\|^2_{A^{T}A}\leq\left\|x_{k}-x_{\star}\right\|^2_{A^{T}A}-\sum\limits_{j_k\in \mathcal{R}_{k}} \frac{1}{|\mathcal{R}_{k}|}\cdot\frac{ \max\limits_{1\leq j\leq n}  \left|A^T_{(j)}r_k\right|^2}{ \left\| A_{\left(j_{k}\right)}\right \|_{2}^{2}} .
\end{equation}

For $k=0$, we have
\begin{align*}
\max\limits_{1\leq j\leq n}\left|A^T_{(j)}r_0\right|^2&= ~\max\limits_{1\leq j\leq n}\left|A^T_{(j)}r_0\right|^2 \cdot \frac{\left\|A^T r_0\right\|^2_2}{\sum\limits_{j=1}^{n}\left|A^T_{(j)}r_0\right|^2}\geq~\frac{1}{n}\cdot\left\|A^T r_0\right\|^2_2,
\end{align*}
which together with a result from \cite{Bai2018}:
\begin{align}
\|A^{T}x\|^2_2\geq\lambda_{\min}\left( A^{T} A\right)\|x\|^2_2  \label{1210}
\end{align}
is valid for any vector $x$ in the column space of $A$, implies
\begin{align}
\max\limits_{1\leq j\leq n}\left|A^T_{(j)}r_0\right|^2&\geq~\frac{1}{n}\cdot\lambda_{\min}\left( A^{T} A\right) \cdot\left\| Ax_\star-Ax_0\right\|^2_2\notag\\
&=~\frac{1}{n}\cdot\lambda_{\min}\left( A^{T} A\right) \cdot\left\|x_0-x_\star\right\|^2_{A^{T}A}. \label{10}
\end{align}
Thus, substituting (\ref{10}) into (\ref{71212}), we obtain
\begin{align}
\left\|x_{1}-x_{\star}\right\|^2_{A^{T}A}&\leq~\left\|x_{0}-x_{\star}\right\|^2_{A^{T}A}-\sum\limits_{j_0\in\mathcal{R}_{0}} \frac{1}{|\mathcal{R}_{0}|}\cdot\frac{ 1}{ \left\| A_{\left(j_{0}\right)}\right \|_{2}^{2}} \cdot \frac{1}{n}\cdot\lambda_{\min}\left( A^{T} A\right) \cdot\left\|x_0-x_\star\right\|^2_{A^{T}A} \notag\\
&=~\left(1-\frac{1}{|\mathcal{R}_{0}|}\cdot\frac{1}{\sum\limits_{j_0\in \mathcal{R}_{0}}\|A_{(j_0)}\|^2_2 }\cdot\frac{1}{n} \cdot\lambda_{\min}\left( A^{T} A\right)\right)\cdot\| x_0-x_\star\|^2_{A^{T}A}, \notag
\end{align}
which is just the estimate (\ref{4}).

%
For $k\geq1$, we have
\begin{align*}
\max\limits_{1\leq j\leq n}\left|A^T_{(j)}r_k\right|^2&= ~\max\limits_{1\leq j\leq n}\left|A^T_{(j)}r_k\right|^2 \cdot \frac{\left\|A^T r_k\right\|^2_2}{\sum\limits_{j=1}^{n}\left|A^T_{(j)}r_k\right|^2}.
\end{align*}
Note that, according to the update formula in Algorithm \ref{alg2}, it is easy to obtain
\begin{align}
 A^T_{(j_{k-1})}r_k&=  ~A^T_{(j_{k-1})}\left(r_{k-1}- \frac{ A^T_{(j_{k-1})}r_{k-1} }{ \| A_{\left(j_{k-1}\right)} \|_{2}^{2}}A_{\left(j_{k-1}\right)} \right)   \notag\\
&= ~A^T_{(j_{k-1})}\left(r_{k-1}\right)- A^T_{(j_{k-1})}\left(r_{k-1}\right)  =~0. \label{9}
\end{align}
Then
\begin{align*}
\max\limits_{1\leq j\leq n}\left|A^T_{(j)}r_k\right|^2&=~\max\limits_{1\leq j\leq n}\left|A^T_{(j)}r_k\right|^2 \cdot \frac{\left\|A^T r_k\right\|^2_2}{\sum\limits _{j=1 \atop j \neq j_{k-1}}^{n}\left|A^T_{(j)}r_k\right|^2}\geq~ \frac{1}{n-1}\cdot\left\|A^Tr_k\right\|^2_2,
\end{align*}
which together with  (\ref{1210}) yields
\begin{align}
\max\limits_{1\leq j\leq n}\left|A^T_{(j)}r_k\right|^2&\geq~ \frac{1}{n-1}\cdot\lambda_{\min}\left( A^{T} A\right)\left\|Ax_\star-Ax_k\right\|^2_2\notag\\
&=~ \frac{1}{n-1}\cdot\lambda_{\min}\left( A^{T} A\right)\left\|x_k-x_\star\right\|^2_{A^TA}.\label{12}
\end{align}
Thus, substituting (\ref{12}) into (\ref{71212}), we get
\begin{align}
\left\|x_{k+1}-x_{\star}\right\|^2_{A^{T}A}&\leq~\left\|x_{k}-x_{\star}\right\|^2_{A^{T}A}-\sum\limits_{j_k\in \mathcal{R}_{k}} \frac{1}{|\mathcal{R}_{k}|}\cdot  \frac{ 1}{ \left\| A_{\left(j_{k}\right)}\right \|_{2}^{2}}\cdot \frac{1}{n-1}\cdot\lambda_{\min}\left( A^{T} A\right)\left\|x_k-x_\star\right\|^2_{A^TA}  \notag\\
&=~\left(1-\frac{1}{|\mathcal{R}_{k}|}\cdot\frac{1}{\sum\limits_{j_k\in \mathcal{R}_{k}}\|A_{(j_k)}\|^2_2 }\cdot\frac{1}{n-1} \cdot\lambda_{\min}\left( A^{T} A\right) \right)\| x_k-x_\star\|^2_{A^TA}.
\end{align}
So the estimate (\ref{5}) is obtained.
%
By induction on the iteration index $k$, we have the estimate (\ref{6}).
\end{proof}
\begin{rmk}
\label{rmk1}
Since $1\leq \alpha \leq n$ and $\min\limits_{1\leq j\leq n}\|A_{(j)}\|^{2}_2\leq \beta \leq \|A\|^{2}_{F}$, it holds that
$$\left(1-\frac{\lambda_{\min}\left( A^{T} A\right) }{\min\limits_{1\leq j\leq n}\|A_{(j)}\|^{2}_2 \cdot (n-1) } \right) \leq\left(1-\frac{\lambda_{\min}\left( A^{T} A\right) }{\alpha\cdot \beta\cdot(n-1)} \right)\leq\left(1-\frac{\lambda_{\min}\left( A^{T} A\right) }{n\cdot \|A\|^{2}_{F}\cdot(n-1)} \right).$$
Hence, the convergence factor of the GGS method is small when the parameters $\alpha$ and $\beta$ are small. So, the smaller size of $|\mathcal{R}_{k}|$ is,  the better convergence factor of the GGS method is when $\beta$ is fixed. From the analysis before Algorithm \ref{alg2}, we know that the size of $|\mathcal{R}_{k}|$ may be smaller than that of $|\mathcal{V}_{k}|$. This is one of the reasons that our algorithm behaves better in the computing time.
\end{rmk}
\begin{rmk}\label{rmk12111}


 If $\alpha=1$ and $ \beta=\min\limits_{1\leq j\leq n}\|A_{(j)}\|^{2}_2$,  the right side of (\ref{5}) is smaller than
 $$\left(1-\frac{1}{\min\limits_{1\leq j\leq n}\|A_{(j)}\|^{2}_2 \cdot (n-1)}\lambda_{\min}\left( A^{T} A\right)\right)\left\|x_{k}-x_{\star}\right\|_{A^{T} A}^{2}.$$
Since
 $$ \min\limits_{1\leq j\leq n}\|A_{(j)}\|^{2}_2 \cdot (n-1)\leq\|A\|^2_F-\min\limits_{1\leq j\leq n}\|A_{(j)}\|^{2}_2<\|A\|^2_F,$$
which implies
 $$\frac{1}{\min\limits_{1\leq j\leq n}\|A_{(j)}\|^{2}_2 \cdot (n-1)}>\frac{1}{2} \left(\frac{1}{\|A\|_{F}^{2}-\min \limits_{1 \leq j \leq n}\left\|A_{(j)}\right\|_{2}^{2}}+\frac{1}{\|A\|_{F}^{2}}\right ),$$
we have
\begin{align*}
&\left(1-\frac{1}{\min\limits_{1\leq j\leq n}\|A_{(j)}\|^{2}_2 \cdot (n-1)}\lambda_{\min}\left( A^{T} A\right)\right)\left\|x_{k}-x_{\star}\right\|_{A^{T} A}^{2}\\
&<\left(1-\frac{1}{2}\left (\frac{1}{\|A\|_{F}^{2}-\min \limits_{1 \leq j \leq n}\left\|A_{(j)}\right\|_{2}^{2}}+\frac{1}{\|A\|_{F}^{2}} \right)\lambda_{\min}\left( A^{T} A\right)\right)\left\|x_{k}-x_{\star}\right\|_{A^{T} A}^{2}.
\end{align*}
Note that the error estimate in expectation of the GRCD method in \cite{Bai2019} is
$$
\mathbb{E}_{k}\left\|x_{k+1}-x_{\star}\right\|_{A^{T} A}^{2} \leq\left (1- \frac{1}{2} \left(\frac{1}{\|A\|_{F}^{2}-\min \limits_{1 \leq j \leq n}\left\|A_{(j)}\right\|_{2}^{2}}+\frac{1}{\|A\|_{F}^{2}}   \right)\lambda_{\min } (A^{T} A)\right  )\left\|x_{k}-x_{\star}\right\|_{A^{T} A}^{2} , $$
where $ k=1,2, \ldots.$
So the convergence factor of GGS method is slightly better for the above case.
\end{rmk}

\section{Numerical Experiments}\label{sec4}
In this section, we report the numerical results of the GGS and GRCD methods for solving the linear least squares problem with the matrix $A\in R^{m\times n}$ from two sets. One is generated randomly by using the MATLAB function \texttt{randn}, and the other includes some sparse matrices originating in different applications from \cite{Davis2011}. 
To compare the GGS and GRCD methods fairly and directly, we use the examples from \cite{Bai2019}.

We compare the two methods mainly in terms of the iteration numbers (denoted as ``IT'') and the computing time in seconds (denoted as ``CPU''), and the IT and CPU listed in our numerical results denote the arithmetical averages of the required iteration numbers and the elapsed CPU times with respect to 50 times repeated runs of the corresponding methods. 
Furthermore, to give an intuitive compare of the two methods, we also present the iteration number speed-up of the GGS method against the GRCD method, which is defined as
\begin{eqnarray*}
\texttt{IT speed-up}=\frac{\texttt{IT of GRCD } }{\texttt{IT of GGS } },
\end{eqnarray*}
and the computing time speed-up of the GGS method against the GRCD method, which is defined as
\begin{eqnarray*}
\texttt{CPU speed-up}=\frac{\texttt{CPU of GRCD} }{\texttt{CPU of GGS} }.
\end{eqnarray*}
In addition, for the sparse matrices from \cite{Davis2011}, we define the density as follows
\begin{eqnarray*}
\texttt{density}=\frac{\texttt{number of nonzero of an $m\times n$ matrix}}{\texttt{mn}},
\end{eqnarray*}
and use \texttt{cond(A)} to represent the Euclidean condition number of the matrix $A$. 

In our specific experiments, the solution vector $x_\star$ is generated randomly by the MATLAB function \texttt{randn}. For the consistent problem, we set the right-hand side $b=Ax_{\star}$. For the inconsistent problem, we set the right-hand side $b=Ax_{\star}+r_{0}$, where $r_0$ is a nonzero vector belonging to the null space of $A^{T}$, which is generated by the MATLAB function \texttt{null}. All  the test problems are started from an initial zero vector $x_{0}=0$ and terminated once the \emph{relative solution error} (\texttt{RES}), defined by $$\texttt{RES}=\frac{\left\|x_{k}-x_{\star}\right\|^{2}_2}{\left\|x_{\star}\right\|^{2}_2},$$ satisfies $\texttt{RES}\leq10^{-6}$ or the number of iteration steps exceeds $ 200,000$.

\begin{table}[tp]
  \centering
  \fontsize{6.5}{8}\selectfont
    \caption{ Numerical results for the \texttt{GGS} and \texttt{GRCD} methods when the system is consistent.}
    \label{table1}
    \begin{tabular}{|c|c|c|c|c|c|c|}
    \hline
    \multirow{2}{*}{$m\times n$}&
    \multicolumn{3}{c|}{IT}&\multicolumn{3}{c|}{CPU}\cr\cline{2-7}
    &GGS&GRCD&IT speed-up& GGS&GRCD&CPU speed-up\cr
\hline
$1000\times 50$&   126.0000 & 128.2400  &  1.0178  &  0.0138  &  0.0631  &  4.5909 \cr
$1000\times 100$&  374.0000 & 361.5000  &  0.9666  &  0.0466  &  0.1703  &  3.6577\cr
$1000\times 150$&  603.0000 & 600.5600  &  0.9960  &  0.1044  &  0.3194  &  3.0599\cr\hline

$2000\times 50$&   108.0000 & 106.2600  &  0.9839  &  0.0125  &  0.0525  &  4.2000 \cr
$2000\times 100$&  246.0000 &245.7200   & 0.9989   & 0.0466   & 0.1313   & 2.8188\cr
$2000\times 150$&  439.0000 & 445.6800  &  1.0152  &  0.1094  &  0.2691  &  2.4600\cr\hline

$3000\times 50$&   105.0000 & 104.9600  &  0.9996  &  0.0172  &  0.0556  &  3.2364\cr
$3000\times 100$&  231.0000 & 236.8800  &  1.0255  &  0.0619  &  0.1444  &  2.3333\cr
$3000\times 150$&  409.0000 & 409.0400  &  1.0001  &  0.1400  &  0.2834  &  2.0246\cr\hline

$4000\times 50$&   96.0000  & 99.7400   & 1.0390   & 0.0194   & 0.0572   & 2.9516 \cr
$4000\times 100$&  205.0000 & 209.1200  &  1.0201  &  0.0678  &  0.1388  &  2.0461\cr
$4000\times 150$&  337.0000 & 343.6600  &  1.0198  &  0.1638  &  0.2662  &  \textbf{1.6260}\cr\hline

$5000\times 50$&   96.0000  & 95.3800   & 0.9935   & 0.0250   & 0.0600   & 2.4000 \cr
$5000\times 100$&  195.0000 & 203.0800  &  1.0414  &  0.0728  &  0.1569  &  2.1545\cr
$5000\times 150$&  340.0000 & 337.0200  &  0.9912  &  0.1819  &  0.2978  &  1.6375\cr\hline
   \end{tabular}
\end{table}
\begin{table}[tp]
  \centering
  \fontsize{6.5}{8}\selectfont
    \caption{ Numerical results for the \texttt{GGS} and \texttt{GRCD} methods when the system is inconsistent.}
    \label{table2}
    \begin{tabular}{|c|c|c|c|c|c|c|}
    \hline
    \multirow{2}{*}{$m\times n$}&
    \multicolumn{3}{c|}{IT}&\multicolumn{3}{c|}{CPU}\cr\cline{2-7}
    &GGS&GRCD&IT speed-up& GGS&GRCD&CPU speed-up\cr
\hline
$1000\times 50$&   120.0000 & 124.8600  &  1.0405  &  0.0125  &  0.0591  &  4.7250 \cr
$1000\times 100$&  329.0000 & 321.3800  &  0.9768  &  0.0400  &  0.1591  &  3.9766\cr
$1000\times 150$&  589.0000 & 579.5600  &  0.9840  &  0.0994  &  0.3009  &  3.0283\cr\hline

$2000\times 50$&   113.0000 & 110.2000  &  0.9752  &  0.0119  &  0.0566  &  \textbf{4.7632} \cr
$2000\times 100$&  245.0000 & 250.0600  &  1.0207  &  0.0531  &  0.1322  &  2.4882\cr
$2000\times 150$&  434.0000 & 444.7200  &  1.0247  &  0.1113  &  0.2666  &  2.3961\cr\hline

$3000\times 50$&   107.0000 & 105.0800  &  0.9821  &  0.0194  &  0.0553  &  2.8548\cr
$3000\times 100$&  235.0000 & 232.3600  &  0.9888  &  0.0609  & 0.1412   & 2.3179\cr
$3000\times 150$&  399.0000 & 401.4600  &  1.0062  &  0.1403  &  0.2769  &  1.9733\cr\hline

$4000\times 50$&   95.0000  & 97.4800   & 1.0261   & 0.0194   & 0.0537   & 2.7742 \cr
$4000\times 100$&  220.0000 & 216.7400  &  0.9852  &  0.0694  &  0.1444  &  2.0811\cr
$4000\times 150$&  348.0000 & 356.8000  &  1.0253  &  0.1525  &  0.2772  &  1.8176\cr\hline

$5000\times 50$&   87.0000  & 91.9400   & 1.0568   & 0.0187   & 0.0559   & 2.9833 \cr
$5000\times 100$&  212.0000 & 215.9600  &  1.0187  &  0.0862  &  0.1566  &  1.8152\cr
$5000\times 150$&  336.0000 & 339.2600  &  1.0097  &  0.1641  &  0.3050  &  1.8590\cr\hline
   \end{tabular}
\end{table}
For the first class of matrices, that is, the randomly generated matrices, the numerical results on IT and CPU are listed in Table \ref{table1} when the linear system is consistent, and in Table \ref{table2} when the linear system is inconsistent. From Tables \ref{table1} and \ref{table2}, we see that the GGS method requires almost the same number of iterations as that of the GRCD method, but the GGS method is more efficient in term of the computing time. The computing time speed-up is at least 1.626 (see Table \ref{table1} for the $4000\times 150$ matrix) and at most 4.7632 (see Table \ref{table2} for the $2000\times  50$ matrix).
\begin{table}[!htbp]\centering
\begin{small}\scriptsize
\caption{Numerical results for the \texttt{GGS} and \texttt{GRCD} methods when the system is consistent.}\centering \label{Table3}
 \begin{tabular}{ccccccccccccccccccc}
 \hline
\textbf{name}  &&\textbf{abtahal}     & \textbf{Cities} &\textbf{divorce}&\textbf{ WorldCities}&\textbf{ Trefethen\_300} &\textbf{cage5} &  \\
\hline
 $m \times n$  && $14596 \times 209 $ & $55 \times 46 $ & $50 \times 9$  &$315 \times 100$    &$300 \times 300 $        &$37 \times 37 $ &  \\

density        && 1.68\%             &    53.04\%      & 50.00\%        & 23.87\%            &5.20\%                   &17.02\%       &    \\

\texttt{cond(A)} &&  12.23            &   207.15          & 19.39         & 66.00              &    1772.69              &15.42           &   \\
 \hline
IT            &\texttt{GGS}&  14888            &  29181         & 634         & 5011              & 3210            &1477         &   \\
              &\texttt{GRCD}&  13966           &  40937         &  647       & 5011            &   1374            & 1624.4           &   \\
 & \texttt{IT speed-up } &  0.9380            &   1.4029         & 1.0200        & 1.0000      & 0.4280       & 1.0998         &   \\
\hline
CPU            &\texttt{GGS}&   8.2550          &  0.1747       & 0.0028         &0.0772             &  0.0416            &0.0066          &   \\
               &\texttt{GRCD}&   12.6428        &  1.8497       & 0.0316        & 0.2916              & 0.0734             &0.0700          &   \\
  &\texttt{CPU speed-up }&  \textbf{1.5315}        &   10.5886           & \textbf{11.2222}        & 3.7773            &   1.7669             &10.6667           &   \\
\hline
\end{tabular}
\end{small}
\end{table}

\begin{table}[!htbp]\centering
\begin{small}\scriptsize
\caption{Numerical results for the \texttt{GGS} and \texttt{GRCD} methods when the system is inconsistent.}\centering \label{Table4}
 \begin{tabular}{ccccccccccccccccc}
 \hline
\textbf{name}  &&\textbf{abtahal}     & \textbf{Cities} &\textbf{divorce}&\textbf{ WorldCities}& \\
\hline
 $m \times n$  && $14596 \times 209 $ & $55 \times 46 $ & $50 \times 9$  &$315 \times 100$    &  \\

density        && 1.68\%             &    53.04\%      & 50.00\%        & 23.87\%            &  \\

\texttt{cond(A)} &&  12.23            &   207.15          & 19.39         & 66.00              &     \\
 \hline
IT            &\texttt{GGS}&  11264            &  28449         & 552        &  3532            &  \\
              &\texttt{GRCD}& 12571          &   39752         &   496.6800      &  3576.2           &     \\
 &\texttt{IT speed-up }  &   1.1160           &   1.3973        & 0.8998        &  1.0125      &   \\
\hline
CPU            &\texttt{GGS}&   6.2750         &  0.1716      & 0.0028         &0.0550             &  \\
               &\texttt{GRCD}&  11.3034       &  1.8278      &  0.0213       &  0.2050             & \\
&\texttt{CPU speed-up }  &  1.8013       &  10.6539         & 7.5556      &   3.7273         &   \\
\hline
\end{tabular}
\end{small}
\end{table}

For the second class of matrices, that is, the sparse full column rank matrices from \cite{Davis2011}, the numerical results on IT and CPU are listed in Table \ref{Table3} when the linear system is consistent, and in Table \ref{Table4} when the linear system is inconsistent. In both tables, the iteration numbers of the GGS and GRCD methods are almost the same except for the case of the matrix \textbf{Trefethen\_300}, which is very ill-conditioned.  But for all the matrices, the CPUs of the GGS method are smaller than those of the GRCD method, with the CPU speed-up being at least 1.5315 (the matrix abtahal in Table \ref{Table3}) and at most 11.2222 (the matrix divorce in Table \ref{Table3}).

Therefore, in all the cases, although the GGS method requires almost the same number of iterations as that of the GRCD method except for a very special case, the former 
outperforms the latter in term of the computing time, which is consistent with the analysis before Algorithm \ref{alg2}.


\bibliography{mybibfile}

\end{document}